\documentclass[11pt,reqno]{amsproc}
\usepackage[margin=1in]{geometry}
\usepackage{amsmath, amsthm, amssymb}
\usepackage{hyperref}
\hypersetup{colorlinks=true, pdfstartview=FitV, linkcolor=blue,citecolor=blue, urlcolor=blue}
\usepackage[abbrev,lite,nobysame]{amsrefs}
\usepackage{times}
\usepackage[usenames,dvipsnames]{color}
\usepackage{bbm}
\usepackage{bm}
\usepackage{subcaption}
\usepackage{mathtools}
\usepackage{pdfpages}
\usepackage{comment}
\definecolor{labelkey}{rgb}{0,0,1}

\newcommand{\R}{\mathbb{R}}

\newcommand{\N}{\mathbb{N}}

\def\rn{{\mathbb R}^N}
\def\rnp{{\mathbb R}^N_+}

\def\crnp{\overline{\mathbb R}^N_+}

\def\comega{\overline\Omega }
\def\ang#1{\langle {#1} \rangle}

\def\N{\mathbb N}
\def\R{\mathbb R}

\providecommand{\R}{\mathbb{R}}

\providecommand{\N}{\mathbb{N}}

\newcommand{\pv}{\operatorname{p.\!v.}}

\renewcommand{\leq}{\leqslant}

  \newtheorem{thm}{Theorem}[section]
  
  \newtheorem{Lemma}[thm]{Lemma}
  \newtheorem{Proposition}[thm]{Proposition}
    \newtheorem{Definition}[thm]{Definition}
  \newtheorem{remark}[thm]{Remark}

\usepackage{etoolbox}
\patchcmd{\subsubsection}{\itshape}{\itshape\bfseries}{}{} 

\title[]{A Lions' type formula for some reproducing kernel Hilbert spaces of fractional harmonic functions}

\author[]{Sidy M. Djitte and Franck Sueur}

\address[S. M. Djitte]{Department of Mathematics
Maison du nombre, 6 avenue de la Fonte, 
University of Luxembourg,  
L-4364 Esch-sur-Alzette, Luxembourg}
\email{sidymoctar.djitte@uni.lu}
\address[F. Sueur]{Department of Mathematics
Maison du nombre, 6 avenue de la Fonte, 
University of Luxembourg,  
L-4364 Esch-sur-Alzette, Luxembourg}
\email{Franck.Sueur@uni.lu}

\keywords{Reproducing kernel Hilbert space, Hadamard variation formula, $a$-transmission Sobolev spaces}

\begin{document}


\begin{abstract}
In \cite{Lions}, J. L. Lions considered a reproducing kernel Hilbert space (RKHS) of harmonic functions on a regular domain with Sobolev traces and obtained a formula that expresses the kernel of this space as an integral on the boundary of some derivatives of the Green function  associated with the  Laplace operator and the  homogeneous Dirichlet boundary condition. 
This result was simplified and extended later by Englis,  Lukkassen,  Peetre, and  Persson in \cite{ELPL} to more general elliptic systems of even orders. In particular, they emphasized that the resemblance between Lions' type formula and the  Hadamard variational formula only appears when the operator is of order $2$. 
In this paper, we  investigate some RKHS of $a$-harmonic functions, where $a$ in $(0,1)$ refers to a fractional exponent of the Laplace operator. For such fractional order pseudo-differential operators,  the local nonhomogeneous Dirichlet problem can be addressed by means of some $a$-transmission Sobolev spaces, which were introduced by H\"ormander in the sixties and recently developed by Grubb in a series of papers. We deduce from these works a fractional Poisson formula, which is applied to obtain a Lions' type formula. We observe, in particular, that despite the order of the operator not being $2$, this formula resembles the  Hadamard variational formula that we prove in the companion paper \cite{sidy-franck_1}.
As a complementary remark, we observe that for a family of RKHS associated with 
the steady Stokes system, a second order system, 
 there is also a  Lions' type formula for their two-point kernels, which turn out not to be similar to the corresponding Hadamard variation formula.
\end{abstract}

\maketitle

\setcounter{tocdepth}{3}
\tableofcontents

\section{Introduction}
\subsection{Reproducing kernel Hilbert space}

The notion of  reproducing kernel Hilbert space was first introduced in 1907 by Stanislaw Zaremba for boundary value problems for harmonic and biharmonic functions, and simultaneously by James Mercer in the theory of integral equations, before being more systematically tackled by Nachman Aronszajn and Stefan Bergman. These spaces have various applications in complex analysis, harmonic analysis, quantum mechanics, and statistical learning theory.
They are now defined as Hilbert spaces of functions in which pointwise evaluations are continuous linear functionals. It then follows from Riesz' theorem that each of these functionals can be represented as an inner product with an element of this Hilbert space. One defines a two-point kernel by considering the inner products of any pair of such elements. \ \par \ 
More precisely, we have the following definition, where  $\Omega$, as in all the paper, is a bounded connected open set of class $C^\infty$ in $\R^N$.
\begin{Definition} \label{defRKHS}
We say that a Hilbert space $H$ of  real-valued functions defined on  $\Omega$ is a reproducing kernel Hilbert space (RKHS for short) 
 if for any $x$ in $\Omega$, the evaluation mapping $E_x$ defined for any $u$ in $H$ by $E_x (u) = u(x)$ 
is linear and continuous from $H$ to $\R$. Then, we denote by  $K_x$ the unique element in $H$ such that for any $u$ in $H$,  $E_x (u) = \langle u,K_x\rangle $, the inner product of $u$ and $K_x$ in $H$, given by Riesz' theorem. Finally, the two-point kernel is defined for any pair of $x$ and $y$ in $\Omega$  by $K(x,y)= \langle K_x,K_y\rangle $. 
\end{Definition}
Observe that it follows from the properties of the inner product that the kernel $K$ is symmetric and positive definite. 
Actually, a famous theorem by  E. H. Moore and N. Aronszajn, see \cite{Aronszajn},  states that every symmetric, positive definite kernel defines a unique reproducing kernel Hilbert space. 
%

\subsection{Lions' formula for a reproducing kernel Hilbert space of harmonic functions}
In \cite{Lions}, J.L. Lions has established an interesting formula for certain reproducing kernel Hilbert spaces of harmonic functions. 
To recall his result, let us introduce the following differential operator on functions defined on the boundary $\partial \Omega$. 
\begin{Definition} \label{DefBel}
Let $L$ be the negative of the Laplace-Beltrami operator on $\partial \Omega$, and let $M  := L+ 1$. This operator $M$ is called  the  smoothed Laplace-Beltrami operator on $\partial \Omega$.  
\end{Definition}
Let $L^2 (\partial \Omega)$ the Hilbert space of square integrable functions on $\partial \Omega$
with respect to the $d-1$-dimensional measure $d\sigma$ on $\partial \Omega$. 
Then the  smoothed Laplace-Beltrami operator $M$ is a positive self-adjoint operator in $L^2 (\partial \Omega)$, so that, by Borel  functional calculus, it makes sense to speak of its powers of  any real order. This allows us to define some Sobolev spaces on $\partial \Omega$ for any $s$ in $\R$ as follows.
\begin{Definition}\label{def1.3}
For any $s$ in $\R$,  the Sobolev space $H^s(\partial \Omega)$ is the closure of the smooth functions on $\partial \Omega$ for the norm associated with the scalar product
\begin{equation} \label{inpr}
\langle u,v\rangle_s := 
\big\langle M^{\frac{s}{2}} \, u,M^{\frac{s}{2}} \,\,v\big\rangle_{L^2 (\partial \Omega)} .
\end{equation}
\end{Definition}
The Dirichlet trace $\gamma_D$ on the boundary $\partial \Omega$, defined by 
\begin{equation} \label{gammad}
    \gamma_D: \, u \in C^\infty (\overline \Omega) \mapsto  u\vert_{\partial \Omega} \in C^\infty (\partial \Omega)
\end{equation} 
extends 
in a continuous linear map from the  Sobolev space $H^s(\Omega)$
to  the Sobolev space $H^{s-\frac12} (\partial \Omega)$ for all $s > \frac12$. It does not extend to the negative Sobolev spaces $H^{t} (\partial \Omega)$, with $t\leq 0$; with such properties. 
However, it extends to the subspace of harmonic functions of the Sobolev space $H^s(\Omega)$ for any $s \in \R$, again as a continuous linear map  with values in  $H^{s-\frac12} (\partial \Omega)$. 
Actually, this holds not only for harmonic functions but also for much more general 
solutions of elliptic equations, as established by the works of  Lions and Magenes, see \cite{LM63,LM68}, by establishing some 
 regularity properties in the case of Sobolev spaces with positive indexes first, then by using a duality argument and finally interpolation theory.
 Another approach, based on the theory of pseudodifferential operators for boundary value problems,  
 emphasized that such a property can be seen as a variant of the partial hypoellipticity of the elliptic equations and originated from H\"ormander in \cite[Theorem 2.5.6]{H63} and Boutet de Monvel \cite{B71}, see also  \cite[Theorem 11.4]{Gr} for a more recent account of the topic. 
With a slight abuse of notation, we keep the same notation $\gamma_D$ for this trace map, as a continuous linear map from the subspace of harmonic functions of the Sobolev space $H^s(\Omega)$ to $H^{s-\frac12} (\partial \Omega)$ 
for any $s \in \R$.

Now, the space of harmonic distributions considered by Lions is the following.
\begin{Definition} \label{def-Hs}
For any $s \in \R$, we define $\mathcal H_s(\Omega) $ as the space of harmonic functions $u$ in $ H^{s+\frac12} (\Omega)$  whose  trace $\gamma_D (u)$  on $\partial \Omega$ belongs to $H^s(\partial \Omega)$.
\end{Definition}
It follows from the references above that for any 
$s$ in $\R$,  any $u$ in  $\mathcal H_s (\Omega)$ is actually a $C^\infty$ function in $\Omega$, and for any $x$ in $\Omega$, 
 we have the Poisson formula: 
%
\begin{equation} \label{Cani1}
    u(x)
    = \Big\langle \gamma_D (u),\, M^{-{s}} \,\gamma_N \big( G_1 (x,\cdot)\big) \Big\rangle_s ,
\end{equation}
where $G_1$ is the Green function, that is the function such that for any $x$ in $\Omega$, 
$G_1(x,\cdot)$  is the classical Green function with singularity at $x$, that is the solution to 
\begin{equation} \label{DIR}
    -\Delta G_1(x,\cdot)=\delta_x \quad\text{in}\quad \mathcal D'(\Omega)\qquad\&\qquad G_1(x,\cdot)=0\quad\text{on}\quad \partial\Omega, 
\end{equation}
and where $\gamma_N \big(G_1(x,\cdot)\big)$ denotes its Neumann trace, that is 
$$\gamma_N \big(G_1(x,\cdot)\big)(z):= \nabla_z G_1(x,z)\cdot\nu(z) \quad \text{for}\quad  z\in\partial\Omega,$$ where $\nu(z)$ is the outer unit normal at $z$.
In \eqref{DIR}, $\delta_x$ denotes the Dirac delta distribution at the position $x$. 
In particular, it follows from elliptic regularity that 
\begin{equation} \label{REg}
    \forall x \in \Omega, \quad   \forall s \in \R, \quad M^{-\frac{s}{2}} \gamma_N \big(G_1(x,\cdot)\big)  \in L^2 (\partial\Omega) ,
\end{equation}
 so that  the 
right-hand side of \eqref{Cani1} 
makes sense. Moreover, for any $s$ in $\R$, the mapping that associates any $g$ in  $H^s(\partial \Omega)$ to the function $\mathcal P_s [g]$ defined for any  $x$  in $\Omega$, by 
\begin{equation} \label{poisson-iso}
 \mathcal P_s[g] (x) :=   \Big\langle g, \,M^{-{s}} \,\gamma_N \big( G_1 (x,\cdot)\big) \Big\rangle_s ,
\end{equation}
is a one-to-one isometry onto $\mathcal H_s(\Omega) $. This allows us to transport the Hilbert structure from $H^s(\partial \Omega)$ to $\mathcal H_s(\Omega) $, that is we 
 equip $\mathcal H_s(\Omega) $  with the scalar product, defined for any $ u,v$ in $\mathcal H_s(\Omega) $, by 
 $\langle \gamma_D (u),\gamma_D (v) \rangle_s $. 
Actually, Lions' result, see \cite[Formula (2.13)]{Lions}, is that it is a RKHS with a  two-point kernel given as a boundary integral involving the 
Green function $G_1$.

\begin{thm} \label{Lions-theo}
For any $s$ in $\R$, the space $\mathcal H_s(\Omega) $  is a RKHS and its two-point kernel $K_s$ is given,  for any pair of $x$ and $y$ in $\Omega$,  by 
 \begin{equation} \label{l-formu}
 K_s (x,y)=  \int_{\partial\Omega} \Big((M^{-\frac{s}{2}} \, \gamma_N \big((G_1(x,\cdot)\big)\Big)(z) \, \Big(M^{-\frac{s}{2}} \, \gamma_N \big((G_1(y,\cdot)\big) \Big)(z)\,d\sigma(z) .
 \end{equation}
\end{thm}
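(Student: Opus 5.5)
The proof follows directly from the Poisson formula \eqref{Cani1}, the regularity property \eqref{REg}, and the fact that $\mathcal P_s$ in \eqref{poisson-iso} is an isometry from $H^s(\partial\Omega)$ onto $\mathcal H_s(\Omega)$. We take all three as given from the discussion preceding the statement. The plan is to find the Riesz representative $K_x$ explicitly and then compute $\langle K_x,K_y\rangle$.

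\textbf{Step 1: continuity of evaluation.} Fix $x\in\Omega$ and set $g_x := M^{-s}\gamma_N\big(G_1(x,\cdot)\big)$. By \eqref{REg} we have $M^{\frac{s}{2}} g_x = M^{-\frac{s}{2}}\gamma_N\big(G_1(x,\cdot)\big)\in L^2(\partial\Omega)$. Hence $g_x\in H^s(\partial\Omega)$, with $\|g_x\|_s = \|M^{-\frac{s}{2}}\gamma_N(G_1(x,\cdot))\|_{L^2(\partial\Omega)}$. If one wants to be careful that $g_x$ lies in the closure of smooth functions, note that $M^{-s}$ maps smooth functions to smooth functions. Moreover, $\gamma_N(G_1(x,\cdot))$ is itself smooth on $\partial\Omega$, because $G_1(x,\cdot)$ is smooth near the boundary away from the singularity $x$. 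For $u\in\mathcal H_s(\Omega)$, formula \eqref{Cani1} gives $u(x)=\langle\gamma_D(u),g_x\rangle_s$. The Cauchy--Schwarz inequality then yields $|u(x)|\le \|g_x\|_s\|u\|_{\mathcal H_s}$, so $E_x$ is continuous and $\mathcal H_s(\Omega)$ is a RKHS.

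\textbf{Step 2: identify $K_x$.} Define $K_x:=\mathcal P_s[g_x]\in\mathcal H_s(\Omega)$. Since $\mathcal P_s$ is a bijective isometry whose inverse is $\gamma_D$, we get $\gamma_D(K_x)=g_x$. Therefore, for every $u\in\mathcal H_s(\Omega)$,
\begin{equation*}
\langle u,K_x\rangle_{\mathcal H_s}=\langle\gamma_D(u),g_x\rangle_s=u(x).
\end{equation*}
By uniqueness in Riesz' theorem, this $K_x$ is the representer from Definition \ref{defRKHS}.

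\textbf{Step 3: compute the kernel.} Using the self-adjointness of $M^{\frac{s}{2}}$ in the definition \eqref{inpr},
\begin{equation*}
K_s(x,y)=\langle K_x,K_y\rangle_{\mathcal H_s}=\langle g_x,g_y\rangle_s=\big\langle M^{-\frac{s}{2}}\gamma_N(G_1(x,\cdot)),\,M^{-\frac{s}{2}}\gamma_N(G_1(y,\cdot))\big\rangle_{L^2(\partial\Omega)}.
\end{equation*}
This is exactly \eqref{l-formu}.

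\textbf{Main obstacle.} The only delicate input is the consistency between the trace map $\gamma_D$ and $\mathcal P_s$, namely that $\gamma_D\circ\mathcal P_s=\mathrm{id}$ on $H^s(\partial\Omega)$. This is needed to conclude $\gamma_D(K_x)=g_x$, and it is part of the isometry statement assumed above. If it had to be checked directly, I would argue on smooth $g$. There, $\mathcal P_s[g](x)=\int_{\partial\Omega}g\,\gamma_N(G_1(x,\cdot))\,d\sigma$ is the classical Poisson integral, up to the sign convention for the normal derivative, so its boundary trace equals $g$. The identity then extends by density, using the continuity of $\gamma_D$ on harmonic functions in $H^{s+\frac12}(\Omega)$ recalled before Definition \ref{def-Hs}.
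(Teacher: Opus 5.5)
Your proof is correct and follows essentially the same route as the paper. Both arguments get continuity of $E_x$ from \eqref{Cani1}, \eqref{REg} and Cauchy--Schwarz, identify the Riesz representer, and then carry out the $L^2(\partial\Omega)$ computation. The only cosmetic difference is how the representer is identified: you construct $K_x=\mathcal P_s[g_x]$ directly and invoke uniqueness in Riesz' theorem, whereas the paper subtracts the two representations of $u(x)$ and uses a density argument to obtain $M^{\frac s2}\gamma_D(K_x)=M^{-\frac s2}\gamma_N(G_1(x,\cdot))$. Both versions rest on the same isometry and inverse property of $\mathcal P_s$.
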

  Let us point out that the integral in \eqref{l-formu} makes sense, and is finite, since  for any  $x$  in $\Omega$, for any $s$ in $\R$, 
$M^{-\frac{s}{2}} \gamma_N \big((G_1(x,\cdot)\big)   $
is in $L^2 (\partial\Omega)$.
Actually, Lions dealt with some function spaces associated with second-order elliptic boundary value problems, see \cite{Lions}.  
Lions used a variational approach and obtained his formula through a penalization limit. 
Another, more direct approach to Theorem \ref{Lions-theo} was proposed later in \cite{ELPL}, together with some extensions to more general elliptic operators of even orders. 
We reproduce below 
their proof of \eqref{l-formu} as a preparation for our extension to the case of fractional Laplace operators.

 %
 \begin{proof}
 It follows from the Poisson formula \eqref{Cani1}, \eqref{REg}
and the Cauchy-Schwarz inequality that for any $x$ in $\Omega$, the evaluation mapping $E_x$, see Definition \ref{defRKHS},   is continuous from $\mathcal H_s(\Omega) $ to $\R$. 
Since $\mathcal H_s(\Omega) $ is a Hilbert space, 
 it follows from Riesz' theorem that  for any  $x$  in $\Omega$, there exists $K_x$ in 
$\mathcal H_s(\Omega) $ 
such that for any $u$ in $\mathcal H_s(\Omega) $, 
\begin{equation} \label{Cani2}
u(x) = \langle \gamma_D (u) ,\, K_x \rangle_s = \langle M^{\frac{s}{2}} \, \gamma_D (u) ,\,  M^{\frac{s}{2}} \, K_x \rangle_{L^2 (\partial \Omega)}.
\end{equation}
%
Gathering \eqref{Cani1} and \eqref{Cani2} we deduce that for any $u$ in  $\mathcal H_s (\Omega)$, 
\begin{equation*} 
  \Big\langle M^{\frac{s}{2}} \, \gamma_D (u) , \, M^{-\frac{s}{2}} \,\gamma_N \big( G_1 (x,\cdot)\big)  - M^{\frac{s}{2}} \, K_x \Big\rangle_{L^2 (\partial \Omega)}=0.
\end{equation*}
 Then by a density argument on $u$, we conclude that 
 $$ M^{\frac{s}{2}} \, K_x = M^{-\frac{s}{2}} \,\gamma_N \big( G_1 (x,\cdot)\big)   ,$$
 almost everywhere on $\partial\Omega$.
 Therefore,   for any  $x$ and $y$ in $\Omega$,
 \begin{align*}
     K_s(x,y) &= 
      \Big\langle M^{\frac{s}{2}} \, K_x, \,M^{\frac{s}{2}} \, K_y  \Big\rangle _{L^2 (\partial \Omega) } 
 \\ &=\Big\langle M^{-\frac{s}{2}} \, \gamma_N \big( G_1 (x,\cdot)\big) ,\, M^{-\frac{s}{2}} \, \gamma_N \big( G_1 (y,\cdot)\big) \Big\rangle _{L^2 (\partial \Omega) },
 \end{align*}
 which is the desired conclusion.
 \end{proof}

\subsection{Resemblance with the Hadamard variation formula}
An interesting observation in \cite{ELPL} is the resemblance of \eqref{l-formu} with the Hadamard variation formula 
for the Green function of the classical Laplacian, which we now recall. 
This formula expresses the derivative of the Green function $G_1$ with respect to the domain. 
More precisely, one considers some perturbations $\Omega_t$ of the domain $\Omega$ in the normal direction by modifying the  boundary $\partial\Omega$ of $\Omega$ in 
$$\partial\Omega_t=\Big\{y=x+t \alpha(x)\nu(x), \;x\in\partial\Omega\Big\},$$
where $\alpha\in C^\infty(\partial\Omega)$, and $t$ runs in a open interval containing $0$. 
Let us rename $G_\Omega$ the Green function
 $G_1$ associated with the domain $\Omega$, as defined by \eqref{DIR}, to emphasize here the dependence on the domain. Then we define the derivative of $G_\Omega$ with respect to the domain in  $\alpha$ as 
\begin{align}
     DG_\Omega (\alpha) (x,y):=\lim_{t\to 0, t \neq 0} \, \frac{G_{\Omega_t}(x,y)-G_\Omega(x,y)}{t} .
\end{align}
Then the pioneering discovery by Hadamard, see \cite{Hadamard}, is that not only does this limit exist, but it is even given by the following explicit integral formula: for any $x$ and $y$ in $\Omega$, 
\begin{align} \label{Hadamard}
    DG_\Omega (\alpha) (x,y) =\int_{\partial\Omega} \gamma_N \big(G_1(x,\cdot)\big)(z) \, \gamma_N \big(G_1(y,\cdot)\big)(z) \,  \alpha(z)d\sigma(z) .
\end{align}
Let us mention that a rigorous proof of Hadamard's formula was given later on by  Garabedian who also considered more general perturbations. After these pioneering works, shape derivative computations have received significant interest and several extensions of Hadamard's formula for more general elliptic boundary value problems were obtained, with various techniques. We refer here to \cite{HenrotPierre,Sokolowski} for more. 
\medskip

Following \cite{ELPL}, 
 it is now clear that  \eqref{l-formu} and \eqref{Hadamard} look very similar, in particular in the case where $s=0$ for which the only difference is the presence of the extra factor $\alpha$ in \eqref{Hadamard}.
 In \cite{ELPL}, it is said furthermore that the authors  "are of the opinion that the connection between Hadamard variation formula and the reproducing kernel just indicated, in the classical harmonic case, seems to be an isolated phenomenon peculiar to the second order case." 
%

\section{RKHS of fractional harmonic functions}

In this work, we address the issue of representing the kernel of some reproducing kernel Hilbert spaces associated with the fractional Laplace operators by some boundary integrals involving the fractional Green function. This formula will turn out to be  very similar to the Hadamard variation formula for  
the fractional Green function, invalidating the conjecture made in  \cite{ELPL}. 

\subsection{Fractional Laplace operators and their Hadamard variational formula}

One quite explicit formula for the  fractional Laplace operator of order $a$, with $a\in (0,1)$, in $N\in \N^*$ dimensions, reads 
$$
(-\Delta)^a u (z) := c_{N,a}\,  \pv \int_{\R^N}\frac{u(z)-u(y)}{|z-y|^{N+2a}}dy,
$$
with 
$$c_{N,a} :=\pi^{N/2}a4^a\frac{\Gamma(\frac{N+2a}{2})}{\Gamma(1-a)},$$
where $\Gamma$ is the gamma function, and where $\pv$ refers to the Cauchy principal value. 
Above $u$ is a function from $\R^N$ with real values. 
The reason for the presence of the normalization constant $c_{N,a}$
 is to match with the Fourier definition  which sets the fractional Laplace operator $(-\Delta)^a$ as the Fourier multiplier of symbol 
$|\xi|^{2a}$. 
 The Green function $G_a(x,y)$ associated to the operator $(-\Delta)^a$ and the homogeneous Dirichlet condition 
is the solution to 
\begin{equation} \label{DIRa}
    (-\Delta)^a G_a(x,\cdot)=\delta_x \quad\text{in}\quad \mathcal D'(\Omega)\qquad\&\qquad G_a(x,\cdot)=0\quad\text{in}\quad \R^N\setminus\Omega.
\end{equation}
Such a Green function plays the same role for the operator $(-\Delta)^a$ as the Green function $G_1$ for the classical Laplace operator. 
In the companion paper \cite{sidy-franck_1}, we computed the following Hadamard-type variational formula for the fractional Green function $G_a(x,y)$:  for any $x$ and $y$ in $\Omega$, 
\begin{align}\label{var-green}
   DG_a (\alpha) (x,y)  = \Gamma^2(1+a)\int_{\partial\Omega} \, \big(\gamma_0^a(G_a(x,\cdot))\big)(z)  \, \big(\gamma^a_0(G_a(y,\cdot))\big)(z)
  \alpha(z)  \;d\sigma(z).
\end{align}
Above, $ \Gamma^2(1+a)$ denotes the square of $ \Gamma(1+a)$ while $\gamma_0^a$ denotes the fractional trace of order $a$, that is 
$$\gamma_0^a(G_a(x,\cdot):= \gamma_D \big(\frac{G_a(x,\cdot)}{d^a}\big),$$ where $d$ is the distance function to the boundary $\partial\Omega$. 
The right-hand side of \eqref{var-green} makes sense because by boundary regularity, for any $x$ in  $\Omega$, $\gamma_0^a(G_a(x,\cdot))$ is $C^\infty$ on ${\partial\Omega}$, see \cite[Equation (2.14)]{Grubb-2014}. 
Let us point out that the analysis performed in \cite{sidy-franck_1} allows for more general perturbations. 
We also refer to the earlier works \cite{DalibardVaret,DFW} on the subject. 

\subsection{Toward some Sobolev spaces of $a$-harmonic functions}

The counterpart of the spaces  $\mathcal H_s(\Omega) $ of  Theorem \ref{Lions-theo}, which we are going to use for  the  fractional Laplace operator of order $a$, are some Sobolev spaces that have been introduced in the theory  of nonhomogeneous Dirichlet-type problems for  pseudodifferential operators   with non-integer orders. Indeed, there is a well-known calculus initiated by Boutet de Monvel
for integer-order  pseudodifferential operators with the 0-transmission property (preserving the $C^\infty$ regularity up to the boundary), including boundary value problems for elliptic differential operators and their inverses. 
Theories for operators without the transmission property have subsequently also been developed, see \cite{Eskin}.
A intermediate category of  pseudodifferential operators   are the ones satisfying a so-called $\mu$-transmission property, where $\mu$ can be any complex number. 
This has been initiated by H\"ormander in \cite{H65} and further developed by Grubb in \cite{Grubb2023,Grubb-2020,Grubb-2015,Grubb-2017,Grubb-2021,Grubb-2016,Grubb-2014,Grubb-2021-en}. This category includes the fractional Laplacian  $(-\Delta)^a$, for which 
the  $\mu$-transmission property is satisfied with $\mu = a$, and this  theory has allowed to obtain some solvability results for nonhomogeneous Dirichlet-type problems in large scales of Sobolev spaces. 
These spaces are the $a$-transmission Sobolev spaces  $H^{a(t)}(\comega)$, where $a,t \in \mathbb R$. 

\subsection{The $a$-transmission Sobolev spaces}

We recall here a few facts on the $a$-transmission Sobolev spaces and their roles in the solvability of  nonhomogeneous Dirichlet problem for the fractional Laplacian operator. We refer here to the series of papers \cite{Grubb2023,Grubb-2020,Grubb-2015,Grubb-2017,Grubb-2021,Grubb-2016,Grubb-2014,Grubb-2021-en}
by Grubb for this material.
We use below the  Fourier transformation with the following normalization: 
$$\mathcal F
u(\xi )= \hat u (\xi )=
\int_{{\mathbb R}^N}e^{-ix\cdot \xi }u(x)\, dx.
$$
  It is invertible from the space
$L^2(\mathbb R^N)$ onto $L^2(\mathbb R^N)$, and the inverse operator is 
$$(\mathcal F^{-1}v)(x)=
(2\pi )^{-N} \int_{{\mathbb R}^N}e^{+ix\cdot \xi }v(\xi )\, d\xi .$$

The classical Sobolev spaces over $\rn$ are defined by 
$$H^s(\rn)=\{u\in \mathcal S'(\mathbb R^N)\mid \langle{\xi }\rangle^s\hat
u\in L ^2(\mathbb R^N)\},\text{ with norm }\|\mathcal F^{-1}(\ang{\xi }^s\hat u)\|_{L^2}, $$
where  $\ang\xi
=(1+|\xi |^2)^{\frac12}$. 
 Next, we associate with $\Omega$ two types of Sobolev spaces (the notations originate from 
H\"ormander's books).
 \begin{Definition}
   For any $s\in\mathbb R$, we define  the {\it
  restricted} Sobolev space of order $s$: 
$$
  \overline H ^s(\Omega  )=r^+H^s (\mathbb R^N),$$
where $r^+$ denotes restriction to $\Omega $,  and the {\it supported} Sobolev space of order $s$: 
  $$ 
  \dot H^s (\comega )=\{u\in H^s (\mathbb R^N)\mid
\operatorname{supp}u\subset \comega  \}.
$$
    \end{Definition}
Let us state a few elementary properties of the 
  restricted and   supported Sobolev spaces. 
    \begin{Proposition}
  We have the following properties. 
    \begin{itemize}
\item  For all $s \in \R$, $\overline H^s(\Omega )$ and $\dot H^{-s}(\comega)$ are dual
spaces, with a duality bracket consistent with the $L^2$-scalar product.
\item The space $C_0^\infty (\Omega )$ of smooth functions with
compact support in $\Omega $ is dense in $\dot H^s(\comega)$ for all $s \in \R$. 
\item The space $\overline H^s(\Omega )$ coincides with
$\dot H^{s}(\comega)$ when $|s|<\frac12$.
  \item  For any $0<a<1$, the operator $r^+  (-\Delta )^a$ continuously maps $\dot H^{a}(\comega)$ into 
 $\overline H^{-a}(\Omega )$. 
  \end{itemize}
 \end{Proposition}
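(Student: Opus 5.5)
We prove the four items separately. All of them are standard facts about the spaces $\overline H^s(\Omega)$ and $\dot H^s(\comega)$ for a bounded smooth domain. The common tool is the flattening of $\partial\Omega$ by a smooth partition of unity, which reduces each item to the half-space $\rnp$. There one can use Fourier analysis, and in particular the order-preserving operators $\Xi^t_\pm=\mathcal F^{-1}\big((\ang{\xi'}\pm i\xi_N)^t\,\cdot\,\big)$, which preserve supports in $\crnp$ (for $\Xi_+^t$) by Paley--Wiener.

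\emph{Duality.} The pairing $\langle r^+U, v\rangle$ for $U\in H^s(\rn)$, $v\in\dot H^{-s}(\comega)$ is the $H^s$--$H^{-s}$ duality $\int \hat U\overline{\hat v}$. It is independent of the extension $U$, because $v$ annihilates every element of $H^s(\rn)$ vanishing on $\Omega$. The latter fact follows from density of $C_0^\infty(\rn\setminus\comega)$ in the space of $H^s$ functions supported in $\rn\setminus\Omega$, which is a smooth-boundary statement proved locally by translation into the exterior. Hence the pairing is well defined and bounded by $\|r^+U\|_{\overline H^s}\|v\|_{H^{-s}}$, using the quotient norm. Conversely, let $\ell$ be a continuous functional on $\overline H^s(\Omega)$. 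Composing with $r^+$ and applying Riesz on $H^s(\rn)$ gives some $v\in H^{-s}(\rn)$. Since $\ell\circ r^+$ vanishes on functions supported outside $\Omega$, we get $\operatorname{supp} v\subset\comega$. Isometry follows from Hahn--Banach and the quotient-norm identity. Consistency with $L^2$ is clear for smooth $U$ and $v$.

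\emph{Density of $C_0^\infty(\Omega)$ in $\dot H^s(\comega)$.} After localization, take $v\in\dot H^s$ supported in $\crnp$. First translate inward, $v_\epsilon=v(\cdot-\epsilon e_N)$, which gives $v_\epsilon\to v$ in $H^s$ and $\operatorname{supp}v_\epsilon\subset\{x_N\geq\epsilon\}$. Then mollify with a kernel of support radius less than $\epsilon$. For the curved domain we use the partition of unity together with a smooth inward vector field transversal to $\partial\Omega$. The only delicate point is that multiplication by cutoffs and diffeomorphisms preserve $H^s$ for all real $s$; this is standard.

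\emph{Identification for $|s|<\frac12$.} This is the main obstacle. One needs that the zero extension $e^+$ is bounded $\overline H^s(\Omega)\to H^s(\rn)$ for $|s|<\frac12$. For $0\leq s<\frac12$, I would reduce to the half-line in the normal variable and use the Hardy-type inequality $\int |u|^2 x_N^{-2s}\lesssim \|u\|_{H^s}^2$ on $\rnp$. Alternatively, interpolate between $s=0$ (trivial) and the boundedness of multiplication by $1_{\rnp}$ on $H^s$ for $s<\frac12$, via Strichartz's multiplier theorem. Granting this, $e^+$ and $r^+$ are mutually inverse bounded maps between $\overline H^s$ and $\dot H^s$ for $0\leq s<\frac12$. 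The range $-\frac12<s<0$ then follows by duality from the first item, since both families are dual to the corresponding ones with $-s$.

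\emph{Mapping of $r^+(-\Delta)^a$.} Since $(-\Delta)^a$ is the Fourier multiplier $|\xi|^{2a}$ and $|\xi|^{2a}\leq\ang\xi^{2a}$, it is bounded $H^a(\rn)\to H^{-a}(\rn)$. Restricting to $\dot H^a(\comega)\subset H^a(\rn)$ and composing with $r^+:H^{-a}(\rn)\to\overline H^{-a}(\Omega)$, which has norm at most one by the definition of the quotient norm, gives the claim.
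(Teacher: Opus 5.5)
The paper does not prove this Proposition. It lists these facts as elementary properties recalled from H\"ormander's framework and Grubb's papers, so there is no argument to compare yours against, and your sketch has to stand on its own. It does: it is correct and follows the standard route.

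\emph{Item 1.} This is the annihilator identity $(H^s(\rn)/Y)^*\cong Y^\perp$ with $Y=\ker r^+$. Your density argument in the exterior domain gives $\dot H^{-s}(\comega)\subset Y^\perp$. Testing against $C_0^\infty(\rn\setminus\comega)$ gives the reverse inclusion.

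\emph{Item 2.} Translate-then-mollify after localization is the right argument.

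\emph{Item 4.} This is just the symbol bound $|\xi|^{2a}\leq\ang{\xi}^{2a}$ together with $\|r^+\|\leq 1$.

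The only non-elementary input is in item 3, where three points deserve a line each.
\begin{itemize}
\item If you take the Hardy route, you need the quotient norm of $\overline H^s(\rnp)$ to be equivalent to the intrinsic Gagliardo norm for $0<s<\frac12$. Even reflection in $x_N$ gives this. The Hardy inequality and your computation of $[e^+u]_{H^s(\rn)}$ are both in intrinsic terms.
\item If you use Strichartz's theorem instead, no interpolation with $s=0$ is needed. The function $1_{\rnp}$ is directly a multiplier on $H^s(\rn)$ for every $|s|<\frac12$. Setting $e^+(r^+U):=1_{\rnp}U$ then works for negative $s$ as well and removes the duality step. Well-definedness follows from your item 2 applied in the lower half-space.
\item If you keep the duality step for $-\frac12<s<0$, check that the isomorphism $\dot H^s(\comega)\to\overline H^s(\Omega)$ obtained by transposing $r^+\colon \dot H^{-s}(\comega)\to\overline H^{-s}(\Omega)$ is really $r^+$ itself. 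Testing against $C_0^\infty(\Omega)$ shows this at once.
\end{itemize}

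Compared with the paper's bare citation, your sketch shows what the proof needs. Apart from $H^s$-invariance under cutoffs and diffeomorphisms, the only input that depends on $|s|<\frac12$ is the Hardy or Strichartz fact.
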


Above restriction and support have to be understood in the sense of the theory of distributions.

To study the nonhomogeneous Dirichlet problem for the fractional Laplacian operator 
  $(-\Delta )^a$,  some other Sobolev-type spaces are going to be introduced. 
 We first consider the case where the operator acts on functions defined  in a half-space. 
 The following auxiliary pseudo-differential operators are instrumental. 
   \begin{Definition}
     For any $t\in\mathbb R$ we define the {\it order-reducing operators}: 
$$
\Xi
_\pm^t=\operatorname{Op}((\ang{\xi '}\pm i\xi_N)^t),
\,  \text{ where } \,  
\ang{\xi '} := (1+|\xi' |^2)^{\frac12},
$$
with $(\xi ' ,\xi_N)$ being the splitting of $\xi$ in $\mathbb R^{N-1} \times \mathbb R$.
    \end{Definition}
 Below we give two  elementary properties of the order-reducing operators which help to understand their roles. 
    \begin{Proposition}
 For all $s,t \in \mathbb R$, the pseudo-differential operator 
$\Xi _\pm^t$ is a linear continuous mapping from 
 $H^{s}(\rn)$ onto $H^{s-t}(\rn)$  with
  inverse $\Xi _\pm^{-t}$.
Moreover,  $\Xi ^t_+$  preserves support in
   $\crnp$
   and is a linear continuous mapping from  
$ \dot H^{s}(\crnp)$ onto $\dot H^{s-t}(\crnp)$ with inverse 
$\Xi _+^{-t}$. 
    \end{Proposition}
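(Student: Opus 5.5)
The plan is to work on the Fourier side and reduce both claims to properties of the symbol $(\ang{\xi'}\pm i\xi_N)^t$.

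\textbf{Step 1: mapping $H^s(\rn)\to H^{s-t}(\rn)$.} The key elementary estimate is
$$|\ang{\xi'}\pm i\xi_N|=(\ang{\xi'}^2+\xi_N^2)^{\frac12}=(1+|\xi|^2)^{\frac12}=\ang{\xi}.$$
Since $\ang{\xi'}\geq 1>0$, the number $\ang{\xi'}\pm i\xi_N$ lies in the open right half-plane. Hence the power $(\ang{\xi'}\pm i\xi_N)^t$ is well defined with the principal branch, and it is smooth in $\xi$. Its modulus equals exactly $\ang{\xi}^t$. Therefore
$$\|\Xi^t_\pm u\|_{H^{s-t}}=\|\ang{\xi}^{s-t}\ang{\xi}^t\hat u\|_{L^2}=\|u\|_{H^s},$$
so $\Xi^t_\pm$ is an isometry from $H^s(\rn)$ into $H^{s-t}(\rn)$. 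Multiplicativity of the principal power in the right half-plane gives $(\ang{\xi'}\pm i\xi_N)^t(\ang{\xi'}\pm i\xi_N)^{-t}=1$. Consequently $\Xi^t_\pm\Xi^{-t}_\pm=\Xi^{-t}_\pm\Xi^t_\pm=I$, which yields surjectivity and identifies the inverse. One should check that the symbol is a tempered multiplier, i.e. polynomially bounded with all derivatives, so that the operator acts on $\mathcal S'$. This holds because it is a symbol of order $t$; strictly, it is a symbol only in a weaker sense in $\xi'$-uniformity, but polynomial bounds suffice.

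\textbf{Step 2: support preservation, via Paley--Wiener.} This is the main point. For fixed $\xi'$, the function $\xi_N\mapsto(\ang{\xi'}+i\xi_N)^t$ extends holomorphically to $\{\operatorname{Im}\xi_N<0\}$: writing $\xi_N=\eta-i\tau$ with $\tau>0$ gives $\ang{\xi'}+i\xi_N=\ang{\xi'}+\tau+i\eta$, whose real part is $\geq 1$. The extension is polynomially bounded there, with $|\cdot|\lesssim(\ang{\xi'}+\tau+|\eta|)^{|t|}$.

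Now take $u\in\mathcal S$ with support in $\crnp$ (or a distribution). Then $\mathcal F_{x_N\to\xi_N}u$ extends holomorphically to $\operatorname{Im}\xi_N<0$ with polynomial growth, given our convention $e^{-ix_N\xi_N}$ with $x_N\geq0$. The product of the two functions has the same property. By the Paley--Wiener--Schwartz theorem, applied in $x_N$ with $x'$ as a parameter, or directly for tempered distributions on $\rn$ with the cone condition, $\Xi^t_+u$ is then supported in $\{x_N\geq0\}$.

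The obstacle is making the Paley--Wiener argument rigorous for $H^s$ distributions with polynomial growth in $\xi'$ as well. I would handle it by density. First treat $u\in C_0^\infty(\rnp)$, which is dense in $\dot H^s(\crnp)$. The Paley--Wiener characterization of $\{x_N\geq 0\}$ support for $L^2$-type functions holds after dividing by $(\ang{\xi'}+i\xi_N)^{m}$ for large $m$, via the Hardy space $H^2$ of the lower half-plane. Then pass to the limit, since $\dot H^{s-t}(\crnp)$ is closed in $H^{s-t}$.

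\textbf{Step 3: conclusion.} Combining, $\Xi^t_+$ maps $\dot H^s(\crnp)$ into $\dot H^{s-t}(\crnp)$. The same argument applies to $\Xi^{-t}_+$, which maps $\dot H^{s-t}(\crnp)$ back into $\dot H^{s}(\crnp)$. Together with the inverse identities from Step 1, this gives bijectivity onto $\dot H^{s-t}(\crnp)$ with inverse $\Xi^{-t}_+$.
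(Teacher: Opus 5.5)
Your proof is correct. The identity $|\ang{\xi'}\pm i\xi_N|=\ang{\xi}$ gives the isometry and inverse claims. Support preservation follows from the holomorphic extension of $(\ang{\xi'}+i\xi_N)^t$ into $\{\operatorname{Im}\xi_N<0\}$ together with Paley--Wiener, checked on $C_0^\infty(\rnp)$ (where, for each fixed $\xi'$, the product is already in the Hardy space, so dividing by $(\ang{\xi'}+i\xi_N)^m$ is unnecessary), and then the density of $C_0^\infty(\rnp)$ in $\dot H^s(\crnp)$.

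The paper gives no proof of its own and simply recalls this proposition from Grubb's and H\"ormander's work. The standard justification there is exactly this Fourier-side Paley--Wiener argument, so your route matches it.
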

Let us mention that $(\ang{\xi '}\pm i\xi_N)^t)$ are actually not classical symbols. 
There is a more refined choice of the 
so-called order-reducing operators, corresponding to some classical symbols, and which enjoy the same properties. 
We refer here to \cite{Grubb2023} and the reference therein to more on this issue.

We can now define  the $a$-transmission Sobolev spaces in the half-space $\rnp$.
      \begin{Definition}
    Let $a$ in $(0,1)$. For all $t \in \mathbb R$ with $t-a> - \frac12$, we define the  $a$-transmission Sobolev  spaces
 $$
H^{a(t)}(\crnp):= \Xi _+^{-a}e^+\overline
H^{t-a}(\rnp) ,
$$
where $e^+$ indicates the extension by zero from  $\rnp$ to $\rn$. 
   \end{Definition}
 %
For a more general domain, such as $\Omega $, the $a$-transmission Sobolev spaces are defined from the half-space case by using local coordinates as follows. 
  \begin{Definition} \label{transmibor}
      For  all $a$ in $(0,1)$,  
   for    $t >   a -1/2$, 
  the $a$-transmission Sobolev space  $H^{a(t)}(\comega)$
  is  defined over $\Omega $ by localization in such a way that 
 $u$ is in $H^t$ on
compact subsets of $\Omega $, and for every $x_0$ in $\partial\Omega $ has an open neighborhood $U$ and a $C^{\infty }$-diffeomorphism in $\rn$ mapping $U'$
to $U$ such that $U'\cap \crnp$ is mapped to $U\cap \comega$, and $u$ is pulled back to a function $u'$  in
$H^{a(t)}(\crnp)$ locally (i.e., $\varphi u'\in H^{a(t)}(\crnp)$ when $\varphi
\in C_0^\infty (U')$).
  \end{Definition}
Another presentation of these spaces is given in \cite{Grubb-2015}.  
Let us state a few elementary properties of  the $a$-transmission Sobolev spaces for some particular values of $t$ and $a$, for which we point out 
 \cite[Formula (13)]{Grubb-2020} 
  \cite[Formula (2.12)]{Grubb-2017}
  and
   \cite[ Lemma 2.2]{Grubb-2021}.
    \begin{Proposition}  Let $a$ in $(0,1)$ and $t \in \mathbb R$, we have the following properties.
    \begin{itemize}
\item  For $-\frac12<t-a<\frac12$,
$$
H^{a(t)}(\comega)=\dot H^t(\comega) .$$
\item  For $t-a
  >\tfrac1{2}$, 
 $$
H^{a(t)}(\comega) \subset \dot H^{t(-\varepsilon )}
  (\overline\Omega )+d^ae^+\overline H^{t-a}(\Omega ),
  $$
  where  $\varepsilon > 0$  if $t-a-\frac12$ is an integer or zero otherwise.  
  \item For any $0<a<1$, for any $s>-a-\frac{1}{2}$,
 the operator $r^+  (-\Delta )^a$ continuously maps the $a$-transmission Sobolev space $H^{(a)(s+2a)}(\comega) $ onto 
 $\overline H^{s}(\Omega )$. 
  \end{itemize}
\end{Proposition}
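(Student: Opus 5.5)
Since this proposition only gathers facts about the $a$-transmission spaces, I would reduce each item to the half-space $\crnp$ using Definition \ref{transmibor}. Indeed, that definition is local: away from $\partial\Omega$ everything is interior $H^t$ regularity, and near the boundary one pulls back by a diffeomorphism. The properties are invariant under such changes of coordinates and under multiplication by cutoffs (up to lower-order terms). So it suffices to check the model statements on $H^{a(t)}(\crnp)=\Xi_+^{-a}e^+\overline H^{t-a}(\rnp)$ and then patch them together with a partition of unity.

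\emph{First item.} Assume $|t-a|<\frac12$. By the earlier Proposition, $\overline H^{t-a}(\rnp)=\dot H^{t-a}(\crnp)$ in this range, meaning that $e^+$ maps $\overline H^{t-a}$ isomorphically onto $\dot H^{t-a}(\crnp)$. Since $\Xi_+^{-a}$ preserves support in $\crnp$ and is an isomorphism from $\dot H^{t-a}(\crnp)$ onto $\dot H^{t}(\crnp)$, with inverse $\Xi_+^{a}$, we get $H^{a(t)}(\crnp)=\dot H^t(\crnp)$. Localization then gives the statement for $\Omega$.

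\emph{Second item.} Assume $t-a>\frac12$. I would decompose $v\in\overline H^{t-a}(\rnp)$ by a Taylor expansion in $x_N$. Write $v=\sum_{j<t-a-1/2} x_N^j\,\rho_j(\gamma_j v)+v_0$, where $\gamma_j$ is the trace of the $j$-th normal derivative and the $\rho_j$ are Poisson-type extensions (lifting operators). The remainder $v_0$ has vanishing traces, so $e^+v_0\in\dot H^{t-a(-\varepsilon)}(\crnp)$; the loss $\varepsilon$ occurs exactly at the borderline where $t-a-\frac12$ is an integer. Applying $\Xi_+^{-a}$ sends $e^+v_0$ into $\dot H^{t(-\varepsilon)}$. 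For the terms $e^+x_N^j\rho_j g$, I would use an explicit computation of $\Xi_+^{-a}$ acting on $e^+x_N^j$ times a function of tangential Fourier type. The symbol $(\ang{\xi'}+i\xi_N)^{-a}$ acts in $x_N$ like the Riemann--Liouville fractional integral, with weight $e^{-\ang{\xi'}x_N}$. It therefore produces $x_N^{j+a}/\Gamma(j+1+a)$ times a smooth factor, which is of the form $d^a e^+\overline H^{t-a}$. This computation, with its mapping bounds uniform in $\xi'$, is the main technical step. I would cite \cite[Lemma 2.2]{Grubb-2021} and \cite[Formula (2.12)]{Grubb-2017} rather than redo it.

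\emph{Third item.} This is the solvability theorem of Grubb, \cite[Formula (13)]{Grubb-2020}, which rests on the $a$-transmission property of $(-\Delta)^a$. The approach is to factor the symbol as $|\xi|^{2a}\sim(\ang{\xi'}-i\xi_N)^a(\ang{\xi'}+i\xi_N)^a$ modulo lower order, so that $r^+(-\Delta)^a\Xi_+^{-a}e^+\approx r^+\Xi_-^{a}e^+$. The operator $r^+\Xi_-^{a}e^+$ maps $\overline H^{s+a}$ onto $\overline H^{s}$, because $\Xi_-^a$ preserves support in the lower half-space and hence its restriction to $\rnp$ is an isomorphism of restricted spaces. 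The lower-order errors are compact. Injectivity comes from uniqueness for the homogeneous Dirichlet problem, via the energy estimate in $\dot H^a(\comega)$. Surjectivity then follows from the Fredholm property with index zero and density, and this Fredholm argument is where I expect the real difficulty. Here too I would rely on Grubb's papers rather than reprove it.
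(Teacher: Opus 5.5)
Your proposal agrees with the paper, which gives no proof of this proposition and simply quotes Grubb's results. Like the paper, you ultimately defer items 2 and 3 to Grubb, and your direct argument for item 1 is correct: you identify $e^+\overline H^{t-a}(\rnp)$ with $\dot H^{t-a}(\crnp)$, apply the support-preserving isomorphism $\Xi_+^{-a}\colon \dot H^{t-a}(\crnp)\to\dot H^{t}(\crnp)$, and then localize.

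One small caution on your heuristic for item 3: the energy argument only gives injectivity for $s\geq -a$. For $-a-\frac12<s<-a$ one has $H^{a(s+2a)}(\comega)=\dot H^{s+2a}(\comega)\not\subset\dot H^{a}(\comega)$, so in that range you must first use Grubb's regularity theorem to place kernel elements in $\dot H^{a}(\comega)$.
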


 We have the following result about fractional traces, which are weighted trace mappings with a weight corresponding to a power of the distance to the boundary. It follows from a direct application of \cite[Theorem 5.1]{Grubb-2015} with parameters $\mu = a-1$, $M=1$, and $p=2$.
\begin{thm}\cite[Theorem 5.1]{Grubb-2015} \label{trace-theo}
   For  all $a$ in $(0,1)$,  
   for    $t >   a -1/2$, 
 the fractional trace map $\gamma _0^{a-1}u=\gamma_D(u/d^{a-1})$ extends into a continuous linear map 
$$\gamma _0^{a-1} \colon  H^{(a-1) (t)}(\comega )\to
H^{t-a+\frac1{2}}(\partial \Omega ),$$
which is surjective, with kernel  $H^{a(t)}(\comega )$. 
\end{thm}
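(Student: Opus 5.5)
The statement is a specialization of \cite[Theorem 5.1]{Grubb-2015}, so the plan is to reduce to the half-space by the localization in Definition \ref{transmibor}, and there verify the three claims (continuity, surjectivity, kernel) with the order-reducing operators.

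\textbf{Step 1: the half-space model.} Take $u\in H^{(a-1)(t)}(\crnp)$, so $u=\Xi_+^{1-a}e^+v$ with $v\in\overline H^{t-a+1}(\rnp)$. Since $t-a+1>\frac12$, $v$ has a standard Dirichlet trace $\gamma_D v\in H^{t-a+\frac12}(\R^{N-1})$. I then want to show that $\gamma_0^{a-1}u=\gamma_D(x_N^{1-a}u)$ equals $c_a\,\gamma_D v$ for some explicit constant $c_a\neq 0$, for instance $1/\Gamma(a)$.

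\emph{Smooth case.} First check this for $v$ smooth up to the boundary. The symbol $(\ang{\xi'}+i\xi_N)^{1-a}$ acts in the normal variable like a Riemann--Liouville-type operator of order $1-a$, composed with the exponential factor. Applied to $e^+v$, where $v$ jumps at $x_N=0$, it produces the leading behavior $\frac{\gamma_D v}{\Gamma(a)}\,x_N^{a-1}$ plus a more regular remainder. This is the model computation $\Xi_+^{1-a}(e^+x_N^0)\sim x_N^{a-1}/\Gamma(a)$, which I would check in one variable via the Fourier transform of $x_+^{\mu}$.

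\emph{Extension.} Then extend by density and continuity, using that $\gamma_D$ is continuous on $\overline H^{t-a+1}$ and that $\Xi_+^{1-a}$ is an isomorphism.

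\textbf{Step 2: continuity, surjectivity and kernel.} From Step 1, $\gamma_0^{a-1}=c_a\gamma_D\circ r^+\Xi_+^{a-1}$, which gives continuity. Surjectivity follows from surjectivity of the classical trace: pick a lift $v$ of any boundary datum, then set $u=\Xi_+^{1-a}e^+v$.

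For the kernel: $\gamma_0^{a-1}u=0$ iff $\gamma_D v=0$, iff $e^+v\in\dot H^{t-a+1}(\crnp)$. Here I need $t-a+1<\frac32$ or higher-order traces; but only the zeroth trace is involved, since we use $e^+\overline H^{t-a}$ after one more order reduction.

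Concretely, $\gamma_D v=0$ means $e^+v=\Xi_+^{1}e^+w$ with $w\in\overline H^{t-a}$. This uses the Hardy-type fact that $\Xi_+^{-1}$ maps $\dot H^{s}$ into $\dot H^{s+1}$ and that functions in $\overline H^{s}$ with zero trace have zero extensions in $\dot H^s$ for $s>\frac12$, $s-\frac12\notin\N$, with an interpolation argument in the exceptional cases. Then $u=\Xi_+^{1-a}\Xi_+^{1}e^+w$, and I must check that this lies in $\Xi_+^{-a}e^+\overline H^{t-a}$.

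This is the \textbf{main obstacle}: the exponents must match, and one has to be careful that $\Xi_+^{-1}e^+\overline H$ versus $e^+$ of the restricted space differ. I would handle it via the identity $H^{(a-1)(t)}=\Xi_+^{1-a}e^+\overline H^{t-a+1}$ and the characterization of $\dot H^{s}$ as $e^+$ of zero-trace elements. Equivalently, use Grubb's identity $H^{a(t)}=\Xi_+^{-a}e^+\overline H^{t-a}$ together with $\Xi_+^{-1}$ mapping the zero-trace subspace of $e^+\overline H^{t-a+1}$ onto $e^+\overline H^{t-a}$ composed appropriately. I would simply invoke \cite[Theorem 5.1]{Grubb-2015} for this identification.

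\textbf{Step 3: curved domains.} Transfer to $\Omega$ via the local diffeomorphisms and a partition of unity. The point is that $d$ pulls back to $x_N$ times a smooth positive factor, so $d^{1-a}$ differs from $x_N^{1-a}$ by a smooth nonvanishing multiplier, which does not affect the trace up to multiplication by a smooth function. Invariance of the transmission spaces under such diffeomorphisms is part of Definition \ref{transmibor}.
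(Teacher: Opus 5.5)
The paper gives no argument of its own here. The statement is quoted from \cite[Theorem 5.1]{Grubb-2015} with $\mu=a-1$, $M=1$, $p=2$, so your opening sentence is already the paper's entire proof. Your half-space reconstruction of continuity and surjectivity is sound. On $H^{(a-1)(t)}(\crnp)=\Xi_+^{1-a}e^+\overline H^{t-a+1}(\rnp)$ one has $\gamma_0^{a-1}=\Gamma(a)^{-1}\gamma_D\circ r^+\Xi_+^{a-1}$. Your constant is right: for fixed $\xi'$, $\Xi_+^{1-a}$ is $\ang{\xi'}+\partial_{x_N}$ composed with convolution by $x_{N,+}^{a-1}e^{-\ang{\xi'}x_N}/\Gamma(a)$. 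Moreover, $t-a+1>\frac12$ makes the classical trace continuous and onto. The kernel step, however, has a genuine gap, and you close it by invoking the very theorem you are trying to prove, which is circular.

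Two things go wrong in that step.

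\textbf{Sign error.} You write $e^+v=\Xi_+^{1}e^+w$, but it should be $\Xi_+^{-1}$. With $\Xi_+^{+1}$ you get $u=\Xi_+^{2-a}e^+w$, which is exactly why your exponents refuse to match.

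\textbf{False auxiliary fact.} You claim that zero-trace elements of $\overline H^{s}(\rnp)$ extend by zero into $\dot H^{s}(\crnp)$ for all $s>\frac12$ with $s-\frac12\notin\N$. This is false once $s>\frac32$. Take $v=x_N\psi(x')\chi(x_N)$: its trace vanishes, but $\partial_{x_N}(e^+v)$ jumps, so $e^+v\notin H^{3/2}$. Hence the detour through $\dot H^{t-a+1}$ cannot work, with or without interpolation.

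\textbf{The fix needs no $\dot H$ spaces.} Write $\Xi_+^1=\operatorname{Op}(\ang{\xi'})+\partial_{x_N}$, whose first term is tangential. The jump formula gives, for $v\in\overline H^{t-a+1}(\rnp)$,
\[
\Xi_+^1(e^+v)=e^+\big(\operatorname{Op}(\ang{\xi'})v+\partial_{x_N}v\big)+(\gamma_Dv)\otimes\delta_{x_N=0}.
\]
So $\gamma_Dv=0$ implies $e^+v=\Xi_+^{-1}e^+w$ with $w\in\overline H^{t-a}(\rnp)$.

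For the converse, take $w\in\overline H^{t-a}(\rnp)$ and any extension $Ew\in H^{t-a}(\rn)$, and set $F:=\Xi_+^{-1}e^+w$. Then $F$ is supported in $\crnp$. Since $e^+w-Ew$ is supported in $\{x_N\le 0\}$, and $\Xi_+^{-1}$ solves $(\ang{\xi'}+\partial_{x_N})F=\cdot$, one gets
\[
r^+F=r^+\Xi_+^{-1}Ew-\mathcal K\gamma_D(\Xi_+^{-1}Ew),
\]
where $\mathcal K\varphi=\mathcal F^{-1}_{\xi'\to x'}(e^{-\ang{\xi'}x_N}\hat\varphi)$ is the Poisson operator. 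Hence $r^+F\in\overline H^{t-a+1}$ with zero trace, and $F=e^+r^+F$. Therefore
\[
\ker\gamma_0^{a-1}=\Xi_+^{1-a}\Xi_+^{-1}e^+\overline H^{t-a}=\Xi_+^{-a}e^+\overline H^{t-a}=H^{a(t)}(\crnp)
\]
directly from the definition, and your ``main obstacle'' disappears.

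One further point concerns Step~3. Invariance of $H^{(a-1)(t)}$ and $H^{a(t)}$ under boundary-preserving diffeomorphisms and under multiplication by smooth cutoffs is not part of Definition~\ref{transmibor}. The definition presupposes it, and your partition-of-unity argument needs it as a separately cited result.
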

Above, $\gamma_D$ denotes the classical Dirichlet trace on the boundary $\partial \Omega$, see \eqref{gammad}.
We have the following existence and uniqueness result for the nonhomogeneous Dirichlet conditions, for which we refer to \cite[Equation (2.26)]{Grubb-2015}.

 \begin{thm}
 \label{f}
 Let $a$ in $(0,1)$.
 For any $\varphi \in
H^{s+a+1/2}(\partial\Omega )$, 
for all $s>-a-\frac{1}{2}$, 
 there is a unique solution $u\in H^{(a-1)(s+2a)}(\comega)$  of the nonhomogeneous Dirichlet problem
\begin{equation}
    \label{inho}
    (-\Delta )^au= 0 \text{ in }\Omega , \quad 
 \gamma _0^{a-1}u=\varphi \text{ on }\partial\Omega   \quad   \text{ and }  \quad 
\operatorname{supp}u\subset \comega .
\end{equation}
\end{thm}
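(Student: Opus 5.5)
The plan is to reduce the nonhomogeneous problem \eqref{inho} to a homogeneous one by lifting the boundary datum, and then to use the bijectivity of $r^+(-\Delta)^a$ on the smaller space $H^{a(s+2a)}(\comega)$. Throughout we set $t:=s+2a$. The hypothesis $s>-a-\frac12$ is exactly $t>a-\frac12$, so Theorem \ref{trace-theo} applies with this $t$. Since $t-a+\frac12=s+a+\frac12$, the datum $\varphi$ lies in the target space of $\gamma_0^{a-1}$.

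\emph{Existence.} By the surjectivity part of Theorem \ref{trace-theo}, there is $w\in H^{(a-1)(t)}(\comega)$ with $\gamma_0^{a-1}w=\varphi$. Functions in $H^{(a-1)(t)}(\comega)$ are supported in $\comega$ by construction, since $\Xi_+^{-(a-1)}$ preserves support in $\crnp$ and the spaces are defined through $e^+$. Next we check that $f:=-r^+(-\Delta)^a w$ belongs to $\overline H^{s}(\Omega)$. This is the mapping property of $(-\Delta)^a$ on $(a-1)$-transmission spaces. Since $(-\Delta)^a$ has the $a$-transmission property and hence also the $(a-1)$-transmission property (types differ by an integer), Grubb's theory gives $r^+(-\Delta)^a\colon H^{(a-1)(t)}(\comega)\to\overline H^{t-2a}(\Omega)$ continuously. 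Then the third item of the last Proposition, applied with the same $s$, provides $v\in H^{a(s+2a)}(\comega)$ with $r^+(-\Delta)^a v=f$. Setting $u:=w+v$, we get $r^+(-\Delta)^a u=0$ and $\operatorname{supp}u\subset\comega$. Since $v\in\ker\gamma_0^{a-1}$ by Theorem \ref{trace-theo}, and $H^{a(t)}\subset H^{(a-1)(t)}$, we also get $\gamma_0^{a-1}u=\varphi$.

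\emph{Uniqueness.} Suppose $u_1,u_2$ are two solutions, and let $z:=u_1-u_2\in H^{(a-1)(t)}(\comega)$. Then $\gamma_0^{a-1}z=0$, so $z\in H^{a(t)}(\comega)$ by the kernel identification in Theorem \ref{trace-theo}. Also $r^+(-\Delta)^a z=0$. It remains to know that $r^+(-\Delta)^a$ is injective on $H^{a(t)}(\comega)$, i.e. that the map in the Proposition is in fact a bijection. To show this, we first use elliptic regularity in the transmission calculus: $z$ solves the equation with smooth (zero) right-hand side, so $z\in H^{a(t')}(\comega)$ for all $t'$, and in particular $z\in H^{a(a)}(\comega)=\dot H^{a}(\comega)$ by the first item of the Proposition. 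Testing the equation with $z$ itself (legitimate via the duality $\overline H^{-a}/\dot H^{a}$ and the density of $C_0^\infty(\Omega)$) gives $\||\xi|^{a}\hat z\|_{L^2}^2=0$. Since $z$ has compact support, this forces $z=0$.

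The main obstacle is the first step of existence: justifying that $r^+(-\Delta)^a$ maps $H^{(a-1)(t)}(\comega)$ into $\overline H^{s}(\Omega)$, together with the regularity lift used for uniqueness. Both are inputs from Grubb's $\mu$-transmission calculus (cf. \cite[Equation (2.26)]{Grubb-2015}) rather than elementary consequences of the properties listed above. I would cite them directly, noting that $(-\Delta)^a$ is of type $a$, and therefore of type $a-1$ modulo integers, so the order-$2a$ mapping theorem holds on both scales.
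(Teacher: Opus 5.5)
Your argument is correct. The paper gives no proof of its own here: it quotes the statement directly from \cite[Equation (2.26)]{Grubb-2015}, so the citation is the paper's only route.

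What you have written is essentially the standard derivation behind that reference. You lift $\varphi$ through the surjective trace of Theorem~\ref{trace-theo}. You then correct the lift by solving the homogeneous Dirichlet problem in $H^{a(s+2a)}(\comega)=\ker\gamma_0^{a-1}$, using the third item of the last Proposition. Uniqueness comes from the kernel identification together with injectivity of $r^+(-\Delta)^a$ on $H^{a(t)}(\comega)$.

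Spelling this out isolates exactly which facts beyond those listed in the paper are needed. The first is the mapping property of $r^+(-\Delta)^a$ on the $(a-1)$-scale, which is legitimate because the $\mu$-transmission condition depends on $\mu$ only modulo $\mathbb Z$. The second is injectivity on $H^{a(t)}(\comega)$, which the paper's Proposition does not record, since it only asserts ``onto''. The citation, of course, buys brevity.

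Your energy argument for injectivity is sound. A few minor remarks:
\begin{itemize}
\item For $t\ge a$ no regularity lift is needed, since $H^{a(t)}(\comega)\subset H^{a(a)}(\comega)=\dot H^{a}(\comega)$.
\item For $t\in(a-\frac12,a)$ the lift uses Grubb's regularity theorem, and its hypotheses hold. The factorization index of $(-\Delta)^a$ equals $a$. Also $z\in\dot H^{\sigma}(\comega)$ for some $\sigma>a-\frac12$; here $\sigma=t$ works because $H^{a(t)}(\comega)=\dot H^t(\comega)$ in this range.
\item Compact support of $z$ plays no role at the end: $|\xi|^{2a}|\hat z|^2=0$ a.e.\ already gives $\hat z=0$ in $L^2$.
\end{itemize}
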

Observe that we have the maximal  regularity authorized by the trace theorem, since $s+2a$ plays here the role of $t=s+2a$ in Theorem \ref{trace-theo}. 

\begin{Definition} \label{def-Psa}
Let $a$ in $(0,1)$, $s  > -a -1/2$ and 
\begin{equation} \label{theta}
     \theta := {\frac{s}{2}+\frac{a}{2}+\frac1{4}}.
\end{equation} 
We define the mapping $\mathcal P_{a,s}$ which maps  any $\varphi$ in $H^{2\theta}(\partial \Omega )$ to 
 $ \mathcal P_{a,s}[\varphi] := u$ in $H^{(a-1)(s+2a)}(\comega)$

 the unique solution of \eqref{inho}.
 \end{Definition}
It follows from Theorem \ref{trace-theo} and Theorem  \ref{f} that, for any  $a\in (0,1)$ and for any  $s>-a-\frac{1}{2}$, the mapping 
 $\mathcal P_{a,s}$ is a one-to-one mapping from  $H^{2\theta}(\partial \Omega )$ with $\theta$ given by \eqref{theta},  into $H^{(a-1)(s+2a)}(\comega)$
 whose inverse is the trace mapping 
$ \gamma _0^{a-1}$.

\subsection{Some Hilbert spaces of $a$-harmonic functions with Sobolev traces and a fractional Poisson formula}
\label{sec_poissona}

We are now ready to define some appropriate Sobolev spaces $\mathcal H_{a,s}(\Omega)$ of $a$-harmonic functions that will  
play the same role as the spaces $\mathcal H_{s}(\Omega)$ played for the classical Laplace operator. 
Recall that in Definition  \ref{def-Hs}, we defined 
 $\mathcal H_s(\Omega) $ as the space of harmonic distributions $u$ in $ \Omega$ with traces in $H^s(\partial \Omega)$, see also \eqref{inpr}.
Similarly, the mappings $\mathcal P_{a,s}$ allow us to
define  some spaces $\mathcal H_{a,s}(\Omega)$ of $a$-harmonic functions as follows and to transport the Hilbert structure from $H^{2\theta}(\partial \Omega )$ 
to $\mathcal H_{a,s}(\Omega) $. 
\begin{Definition} \label{def-Hsa}
Let $a$ in $(0,1)$, $s  > -a -1/2$, and $ \theta $ be given by \eqref{theta}.
 We define  $\mathcal H_{a,s}(\Omega)$  as the Hilbert space of the functions $u$ in the $a$-transmission Sobolev space
 $H^{(a-1)(s+2a)}(\comega)$ such that 
 $(-\Delta )^au= 0 $ in $\Omega $, 
and $\operatorname{supp}u\subset \comega$, 
 equipped with the scalar product of $H^{2\theta}(\partial \Omega )$ for the trace $\gamma _0^{a-1} (u)$, that is 
 \begin{equation} \label{sca}
     \big\langle u,\,v\big\rangle_{\mathcal H_{a,s}(\Omega) }  :=
     \Big\langle  \gamma _0^{a-1}(u),\,  \gamma _0^{a-1}(v) \Big\rangle_{2\theta}.
 \end{equation}
 \end{Definition}
With this definition, the mapping  $\mathcal P_{a,s}$ is a one-to-one isometry  from  $H^{2\theta}(\partial \Omega )$, with $\theta$ given by \eqref{theta}, onto $\mathcal H_{a,s}(\Omega)$, and satisfies both  $\gamma_0^{a-1}\circ P_{a,s} = \mathrm{Id}$ on $H^{2\theta}(\partial\Omega)$,
and $P_{a,s}\circ \gamma_0^{a-1} = \mathrm{Id}$ on $\mathcal H_{a,s}(\Omega)$.


%

%

The next result provides an explicit representation formula for functions in  $\mathcal H_{a,s}(\Omega)$. 

 %
  \begin{Lemma} \label{Poisson-f}
  Let  $a\in (0,1)$, $s>-a-\frac{1}{2}$, and
 $\theta$ be given by \eqref{theta}. 
Let 
  $x\in \Omega$ 
and $u\in \mathcal{H}_{a,s}(\Omega)$.
Then
 \begin{align} \label{main-id-s}
u(x) &=  \Gamma (a)\Gamma(a+1)
 \Big\langle  \gamma _0^{a-1} (u),\, M^{-2\theta} \gamma _0^{a} (G_a (x,\cdot))  \Big\rangle _{2\theta}.
\end{align}
\end{Lemma}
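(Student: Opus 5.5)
First observe that $\langle g, M^{-2\theta}h\rangle_{2\theta} = \langle g,h\rangle_{L^2(\partial\Omega)}$, by the definition \eqref{inpr} of the scalar product and self-adjointness of $M$. So \eqref{main-id-s} is equivalent to the fractional Poisson formula
\[
u(x) = \Gamma(a)\Gamma(a+1)\int_{\partial\Omega} \gamma_0^{a-1}(u)(z)\,\gamma_0^a(G_a(x,\cdot))(z)\,d\sigma(z).
\]
The pairing makes sense because $\gamma_0^a(G_a(x,\cdot))$ is $C^\infty$ on $\partial\Omega$. The right-hand side is also continuous in $\varphi=\gamma_0^{a-1}(u)\in H^{2\theta}(\partial\Omega)$, for any $s>-a-\frac12$. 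The left-hand side $u(x)=\mathcal P_{a,s}[\varphi](x)$ is continuous in $\varphi$ as well. For that I would use Theorem \ref{f} together with interior regularity, since $u$ is $a$-harmonic near $x$ and lies in a Sobolev space of high order on compact subsets. So it suffices to prove the identity for smooth $\varphi$, and then argue by density.

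For smooth $\varphi$, the solution has the form $u = d^{a-1}w$ with $w\in C^\infty(\overline\Omega)$ and $w|_{\partial\Omega}=\varphi$. This holds for instance for $u\in H^{(a-1)(t)}$ with $t$ large, using the structure result for transmission spaces (the second property in the Proposition, with $a$ replaced by $a-1$). I would then write $u = E\varphi + v$. Here $E\varphi := d^{a-1}\tilde\varphi\,\chi$, where $\tilde\varphi$ is a smooth extension of $\varphi$ and $\chi$ is a cutoff near the boundary, extended by zero outside $\Omega$. The remainder $v = u - E\varphi$ then has zero $(a-1)$-trace, so by Theorem \ref{trace-theo} it lies in $H^{a(t)}(\comega)$. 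It solves $r^+(-\Delta)^a v = -r^+(-\Delta)^a E\varphi =: f$, and $f$ is smooth in $\Omega$ with a known boundary behaviour. Since $v$ vanishes outside $\Omega$ and has $a$-transmission regularity, it is represented by the Green function: $v(x)=\int_\Omega G_a(x,y)f(y)\,dy$. Hence
\[
u(x) = E\varphi(x) - \int_\Omega G_a(x,y)\,(-\Delta)^a(E\varphi)(y)\,dy.
\]

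The main step, and the expected obstacle, is to turn this into a boundary integral. This requires a fractional Green identity, that is, an integration by parts for $(-\Delta)^a$ between a function behaving like $d^{a-1}$ and one behaving like $d^{a}$ (namely $G_a(x,\cdot)$). Formally, $\int_\Omega (G\,(-\Delta)^a U - U\,(-\Delta)^a G) = \Gamma(a)\Gamma(a+1)\int_{\partial\Omega}\gamma_0^{a-1}U\,\gamma_0^a G\,d\sigma$. Taking $U=u$ with $(-\Delta)^a u=0$ and $(-\Delta)^aG=\delta_x$ then gives the claimed formula up to sign conventions. I would establish this identity in the form of Grubb's integration-by-parts formula for $\mu$-transmission operators (Green's formula in \cite{Grubb-2016,Grubb-2021-en}), which contains exactly the constant $\Gamma(a)\Gamma(a+1)$. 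To handle the singularity at $x$, I would split $G_a(x,\cdot)=\psi G_a(x,\cdot)+(1-\psi)G_a(x,\cdot)$, with $\psi$ a cutoff localized near $x$. The interior part is dealt with by the usual symmetric pairing, since $(-\Delta)^a$ is symmetric on $\dot H^a(\comega)$ and $u$ is smooth near $x$. The boundary part is smooth times $d^a$, so Grubb's formula applies directly. The remaining checks are the precise constant, through the one-dimensional model computation $(-\Delta)^a$ of $x_+^{a-1}$ and $x_+^{a}$, and the sign. These I would verify on the half-line model.
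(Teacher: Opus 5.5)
Your proposal is correct. It shares the paper's skeleton: reduce by density to smooth boundary data (the paper's $g_n$ and $u_n=\mathcal P_{a,s}[g_n]$, with $u_n(x)\to u(x)$ by interior regularity), then get the boundary integral from Grubb's exact Green formula with constant $\Gamma(a)\Gamma(a+1)$.

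You differ from the paper in how you treat the singularity of $G_a(x,\cdot)$ at $x$.

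\textbf{The paper's route.} It mollifies the Dirac mass. It pairs $u_n$ with $v_{x,\varepsilon}=\int_\Omega G_a(\cdot,y)\rho_\varepsilon(y-x)\,dy$, which solves $(-\Delta)^a v_{x,\varepsilon}=\rho_\varepsilon(\cdot-x)$ with a smooth right-hand side and therefore falls directly under Grubb's formula. It then lets $\varepsilon\to0$. That limit costs pointwise Green function bounds, dominated convergence, and the smooth dependence of $\gamma_0^a(G_a(y,\cdot))$ on $y$.

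\textbf{Your route.} The cutoff splitting $G_a(x,\cdot)=\psi G_a(x,\cdot)+(1-\psi)G_a(x,\cdot)$ needs no limit and no Green function estimates. Instead you must verify two things.
\begin{enumerate}
\item The function $r^+(-\Delta)^a\big((1-\psi)G_a(x,\cdot)\big)$ lies in $C^\infty(\overline\Omega)$, by pseudo-locality together with $(-\Delta)^a(\psi G_a(x,\cdot))-\delta_x\in C^\infty$. Grubb's regularity then puts $(1-\psi)G_a(x,\cdot)$ in $d^aC^\infty(\overline\Omega)$.
\item The interior identity $\langle u,(-\Delta)^a(\psi G_a(x,\cdot))\rangle=\langle(-\Delta)^a u,\psi G_a(x,\cdot)\rangle=0$. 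Combined with $(-\Delta)^aG_a(x,\cdot)=\delta_x$ in $\Omega$, this gives $\int_\Omega u\,(-\Delta)^a\big((1-\psi)G_a(x,\cdot)\big)=u(x)$, which Grubb's formula converts into the boundary integral.
\end{enumerate}

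For the second point, the reason you give, symmetry of $(-\Delta)^a$ on $\dot H^a(\comega)$, is not the right one. The function $u\sim d^{a-1}$ is never in $\dot H^a$, and $\psi G_a(x,\cdot)$ generally is not either. Split further as $u=\psi'u+(1-\psi')u$, with $\psi'=1$ near $\operatorname{supp}\psi$ and compactly supported in $\Omega$. For the smooth compactly supported piece $\psi'u$, use the distributional definition of $(-\Delta)^a$. For the piece $(1-\psi')u$, whose support is disjoint from that of $\psi G_a(x,\cdot)$, use the kernel $c_{N,a}|z-y|^{-N-2a}$ and Fubini.

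The Green-function representation of $v=u-E\varphi$ in your second paragraph plays no role in the argument you actually run, and can be dropped.
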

%


\begin{proof}
Let $u$ in $\mathcal H_{a,s}(\Omega)$. Recalling that $\gamma^{a-1}_0(u)$ is in $H^{2\theta}(\partial\Omega)$,
by density, we have a sequence $(g_n)_{n\in\mathbb N}$ in $C^\infty(\partial\Omega)$ such that
\begin{equation} \label{ref-jeudi}
g_n \to \gamma^{a-1}_0(u) \quad \text{in } H^{2\theta}(\partial\Omega).
\end{equation}
By Theorem \ref{f}, for any $n$ in $\mathbb N$, we set 
 $u_n := \mathcal P_{a,s}[g_n]$. This 
is the unique $u_n$ in $H^{(a-1)(s+2a)}(\Omega)$
solution of
\begin{equation*}
(-\Delta)^a u_n = 0 \ \text{in }\Omega \ \ \& \ \ \gamma^{a-1}_0(u_n) = g_n \ \text{on }\partial\Omega.
\end{equation*}
We fix $x$ in $\Omega$. Let $\rho$ in $C^\infty_0(\mathbb R^N)$ with $\rho \ge 0$ and
$\int_{\mathbb R^N}\rho = 1$, and set $\rho_\varepsilon(z) := \varepsilon^{-N}\rho(z/\varepsilon)$.
For $\varepsilon>0$ small enough so that $\mathrm{supp}\,\rho_\varepsilon(\cdot-x)\subset \Omega$,
we define
\begin{equation} \label{*}
v_{x,\varepsilon}(z) := \int_\Omega G_a(z,y)\rho_\varepsilon(y-x)\,dy.
\end{equation}
Then $v_{x,\varepsilon}=0$ in $\mathbb R^N\setminus\Omega$, and in $\mathcal D'(\Omega)$ we have
\begin{equation} \label{**}
(-\Delta)^a v_{x,\varepsilon} = \rho_\varepsilon(\cdot-x).
\end{equation}
Moreover, since $\rho_\varepsilon(\cdot-x)$ is smooth with compact support in $\Omega$,
the function $v_{x,\varepsilon}$ is regular enough for the hypotheses of \cite[Theorem 5.1]{Grubb-2020}.
We apply Grubb's exact Green formula to the pair $(u_n,v_{x,\varepsilon})$.
Since $(-\Delta)^a u_n = 0$ in $\Omega$ and $(-\Delta)^a v_{x,\varepsilon}=\rho_\varepsilon(\cdot-x)$ in $\Omega$,
we obtain
\begin{equation} \label{***}
\int_\Omega u_n(y)\rho_\varepsilon(y-x)\,dy
= \Gamma(a)\Gamma(a+1)\int_{\partial\Omega} g_n(z)\,\gamma_0^a(v_{x,\varepsilon})(z)\,d\sigma(z).
\end{equation}

We now let $\varepsilon\to0$. Since $g_n$ in $C^\infty(\partial\Omega)$, the solution $u_n$ is smooth in $\Omega$,
hence $\int_\Omega u_n(y)\rho_\varepsilon(y-x)\,dy \to u_n(x)$.
On the other hand, for each $z$ in  $\partial\Omega$, we have by definition

\[
\gamma_0^a(v_{x,\varepsilon})(z) = \lim_{t\to 0^+}\frac{v_{x,\varepsilon}(z+t\nu(z))}{t^s}=\lim_{t\to 0^+}\int_{\Omega}\frac{G_a(z+t\nu(z),y)}{t^s}\rho_\epsilon(y-x)dy.
\]
Then, using the standard estimate on the Green function (see e.g \cite{Tedeuz}), we get for $t>0$ sufficiently small that $t^{-s}G_a(z+t\nu(z),y)\leq C|y-z|^{s-N}\in L^1(\Omega)$. Since $\rho_\varepsilon(\cdot-x)$ is bounded, it therefore follows by the Lebesgue dominated convergence theorem that
\[
\gamma_0^a(v_{x,\varepsilon})(z)
= \int_\Omega \rho_\varepsilon(y-x)\,\gamma_0^a(G_a(y,\cdot))(z)\,dy.
\]
Next, by the smooth dependence of
$\gamma_0^a(G_a(y,\cdot))$ on $y$ in compact subsets of $\Omega$ (in particular \\$y\mapsto \gamma_0^a(G_a(y,\cdot))(z)$
is smooth for each $z$ in $\partial\Omega$), we have for each $z$ in $\partial\Omega$
\begin{equation} \label{****}
\gamma_0^a(v_{x,\varepsilon})(z)
= \int_\Omega \rho_\varepsilon(y-x)\,\gamma_0^a(G_a(y,\cdot))(z)\,dy
\rightarrow \gamma_0^a(G_a(x,\cdot))(z)\quad\text{as $\varepsilon\to 0^+$}.
\end{equation}
Therefore, passing to the limit in \eqref{***}, we obtain
\begin{equation}
u_n(x) = \Gamma(a)\Gamma(a+1)\int_{\partial\Omega} g_n(z)\,\gamma_0^a(G_a(x,\cdot))(z)\,d\sigma(z).
\end{equation}
This regularization argument is similar to the one used in \cite{Sidy-Franck-BP} to justify Green-type identities
in the presence of an interior singularity.

Using Definition \ref{def1.3} of the Sobolev scalar product on $\partial\Omega$, we  arrive at:
\begin{align*}
u_n(x) &= \Gamma(a)\Gamma(a + 1)\Big\langle
M^\theta g_n, \,M^{-\theta}\gamma^a_0(G_a(x, \cdot))\Big\rangle_{L^2(\partial\Omega)}\\
&=\Gamma(a)\Gamma(a + 1)\Big\langle
g_n,\, M^{-2\theta}\gamma^a_0(G_a(x, \cdot))\Big\rangle_{2\theta}.
\end{align*}

For all $x$ in $\Omega$, by \eqref{ref-jeudi}, $u_n(x)$ converges to $w(x)$, where the function $w:\Omega\to\mathbb R$
is defined by
\begin{equation}
w(x) =\Gamma(a)\Gamma(a + 1)\Big\langle
\gamma^{a-1}_0(u),\, M^{-2\theta}\gamma^a_0(G_a(x, \cdot))\Big\rangle_{2\theta}
\end{equation}

On the other hand, the sequence $(u_n)$ converges in $H_{a,s}(\Omega)$ to $u$.
By the interior elliptic regularity of the pseudo-differential operator $(-\Delta)^a$,
this entails that for all $x$ in $\Omega$, $u_n(x)$ converges to $u(x)$.
By the uniqueness of the limit, this yields the desired conclusion that $u(x)=w(x)$.
\end{proof}

Let us point out that, as a corollary, we deduce that for any $\varphi$ in $H^{(a-1)(s+2a)}(\comega)$, for any $x$ in $\Omega$, 
\begin{equation*} 
 \mathcal P_{a,s}[\varphi] (x) =   
 \Gamma (a)\Gamma(a+1)
  \Big\langle  \varphi,\, M^{-2\theta} \gamma _0^{a} (G_a (x,\cdot))  \Big\rangle _{2\theta}.
\end{equation*}

\subsection{A fractional counterpart of Lions' formula}

 Our main result establishes a counterpart of Theorem \ref{Lions-theo} in the case of $a$-harmonic functions by showing that, for an appropriate range of indexes, the spaces $\mathcal H_{a,s}(\Omega)$ are RKHS with two-point kernels given by  a boundary integral of some derivatives of the Green function  $G_a(x,\cdot)$  of the fractional  Laplacian with a singularity at $x$.
We recall that $M$ is the  smoothed Laplace-Beltrami operator on $\partial \Omega$, see Definition \ref{DefBel}.
\begin{thm} \label{s-Lions-theo}
Let  $a\in (0,1)$, $s>-a-\frac{1}{2}$, and
 $\theta$ be given by \eqref{theta}. 
Then the space $\mathcal H_{a,s}(\Omega)$ is a RKHS, and its  two-point kernel, which we  denote $K_{a,s}$, is given,  for any  $x$ and $y$ in $\Omega$,  by 
 \begin{equation} \label{l-formu-s}
 K_{a,s}(x,y)=  \Gamma^2(a) \Gamma^2(a+1)  \int_{\partial \Omega} \Big( M^{-\theta} \,  
  \gamma_0^{a}(G_a(x,\cdot))
   \Big) \Big(M^{-\theta} \,  \gamma_0^{a}(G_a(y,\cdot))  \Big) d\sigma.
 \end{equation}
\end{thm}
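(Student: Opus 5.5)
The plan is to follow the proof of Theorem \ref{Lions-theo} line by line, with the fractional Poisson formula of Lemma \ref{Poisson-f} in place of \eqref{Cani1}.

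\textbf{Step 1: continuity of evaluation.} Fix $x\in\Omega$. Since $\gamma_0^a(G_a(x,\cdot))$ is $C^\infty$ on $\partial\Omega$ (see \cite[Equation (2.14)]{Grubb-2014}), the function $h_x:=M^{-\theta}\gamma_0^a(G_a(x,\cdot))$ lies in $L^2(\partial\Omega)$; indeed it lies in every $H^r(\partial\Omega)$. Rewrite \eqref{main-id-s} using \eqref{inpr} as
\begin{equation*}
u(x)=\Gamma(a)\Gamma(a+1)\big\langle M^{\theta}\gamma_0^{a-1}(u),\, h_x\big\rangle_{L^2(\partial\Omega)}.
\end{equation*}
The Cauchy--Schwarz inequality then gives
\begin{equation*}
|u(x)|\le \Gamma(a)\Gamma(a+1)\,\|h_x\|_{L^2}\,\|u\|_{\mathcal H_{a,s}(\Omega)}.
\end{equation*}
Hence $E_x$ is continuous. $\mathcal H_{a,s}(\Omega)$ is a Hilbert space, since $\mathcal P_{a,s}$ is an isometry from the complete space $H^{2\theta}(\partial\Omega)$ onto it. Therefore $\mathcal H_{a,s}(\Omega)$ is a RKHS.

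\textbf{Step 2: identification of $K_x$.} By Riesz' theorem there is $K_x\in\mathcal H_{a,s}(\Omega)$ with
\begin{equation*}
u(x)=\big\langle M^\theta\gamma_0^{a-1}(u),\,M^\theta\gamma_0^{a-1}(K_x)\big\rangle_{L^2(\partial\Omega)}
\end{equation*}
for every $u$. Subtracting this from the formula of Step 1 shows that
\begin{equation*}
M^\theta\gamma_0^{a-1}(K_x)-\Gamma(a)\Gamma(a+1)h_x
\end{equation*}
is $L^2$-orthogonal to $M^\theta\gamma_0^{a-1}(u)$ for all $u\in\mathcal H_{a,s}(\Omega)$.

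The density argument needs the set of these functions to be dense in $L^2(\partial\Omega)$. This is where care is needed. By the surjectivity in Theorem \ref{trace-theo} and Theorem \ref{f}, $\gamma_0^{a-1}$ maps $\mathcal H_{a,s}(\Omega)$ onto all of $H^{2\theta}(\partial\Omega)$. Moreover, $M^\theta$ maps $H^{2\theta}(\partial\Omega)$ onto $H^{0}(\partial\Omega)=L^2(\partial\Omega)$; this follows from the spectral definition of these spaces via $M$. So the set is all of $L^2(\partial\Omega)$, not merely a dense subset, and we obtain
\begin{equation*}
M^\theta\gamma_0^{a-1}(K_x)=\Gamma(a)\Gamma(a+1)\,h_x \quad \text{a.e. on } \partial\Omega.
\end{equation*}

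\textbf{Step 3: the kernel.} By Definition \ref{defRKHS} and \eqref{sca},
\begin{equation*}
K_{a,s}(x,y)=\langle K_x,K_y\rangle_{\mathcal H_{a,s}(\Omega)}=\big\langle M^\theta\gamma_0^{a-1}(K_x),\,M^\theta\gamma_0^{a-1}(K_y)\big\rangle_{L^2(\partial\Omega)}.
\end{equation*}
Substituting the identity from Step 2 gives exactly \eqref{l-formu-s}, with the constant $\Gamma^2(a)\Gamma^2(a+1)$.

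The only potentially delicate point is the one in Step 2: one must check that $\gamma_0^{a-1}$ is onto $H^{2\theta}(\partial\Omega)$ when restricted to the $a$-harmonic subspace, so that the orthogonality forces the identity. This is guaranteed by Theorem \ref{f} combined with Definition \ref{def-Psa}. Everything else is bookkeeping of powers of $M$.
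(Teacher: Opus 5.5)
Your proof is correct and follows the paper's argument essentially verbatim: continuity of $E_x$ from Lemma \ref{Poisson-f} via Cauchy--Schwarz, then Riesz representation, then the identification $M^\theta\gamma_0^{a-1}(K_x)=\Gamma(a)\Gamma(a+1)M^{-\theta}\gamma_0^a(G_a(x,\cdot))$, and finally the inner-product computation of $K_{a,s}(x,y)$. Where the paper invokes a ``density argument'', you observe that $M^\theta\gamma_0^{a-1}$ maps $\mathcal H_{a,s}(\Omega)$ onto all of $L^2(\partial\Omega)$; this is a harmless and slightly more precise justification of the same step.
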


Therefore, the formula \eqref{l-formu-s} is  very similar to the Hadamard variation formula \eqref{var-green} for the Green function $G_a$, proving that the connection in \cite{ELPL} 
 between the classical Hadamard variation formula and the reproducing kernel associated with the classical Laplace operator, and for a large class of second order elliptic systems, also holds true for some elliptic operators of fractional order $2a$, invalidating the conjecture made in  \cite{ELPL}. 

\begin{remark}
One may  easily check that 
 the formula  \eqref{l-formu-s} is consistent with the classical Lions formula  \eqref{l-formu} for the Laplace operator; since, as $a \rightarrow 1^-$, 
  for any pair of $x$ and $y$ in $\Omega$, for any $s > - \frac32$, 
  \begin{equation*} 
 K_{a,s}(x,y)  \rightarrow   K_{s+3/2} (x,y) .
 \end{equation*}
 which corresponds to the fact that the space $\mathcal H_s (\Omega)$ corresponds to the scalar product in  the Sobolev space $H^s(\partial \Omega)$, whereas the 
  space $\mathcal H_{1,s}(\Omega)$, obtained from Definition \ref{def-Hsa}  by substituting  $a=1$, 
 corresponds to the scalar product in  the Sobolev space $H^{s+3/2}(\partial \Omega)$.
\end{remark}


\begin{remark}
Let us also mention that the formula \eqref{l-formu-s} could be useful 
to determine the boundary behavior of the reproducing kernel, thanks to the knowledge of  the boundary behavior of the Green function $G_a$. 
\end{remark}

\begin{proof}
Let $x$ be in $\Omega$.
As a consequence of Lemma \ref{Poisson-f} and the Cauchy-Schwarz inequality, 
the  evaluation mapping $E_x$ from $\mathcal{H}_{a,s}(\Omega)$ to $\R$ that  linearly maps $u$ to $E_x (u) = u(x)$ is continuous. 
Since $\mathcal H_{a,s}(\Omega)$ is a Hilbert space, 
 it follows from Riesz' theorem that there exists $K_x$ in 
 $\mathcal H_{a,s}(\Omega)$
such that for any $u$ in   $\mathcal H_{a,s}(\Omega)$,
 \begin{align*}
 u(x) &=  \Big\langle \gamma _0^{a-1} (u),\, \gamma _0^{a-1} (K_x)\Big\rangle _{2\theta}
\\
 &= 
 \Big\langle M^{\theta} \gamma _0^{a-1}(u), M^{\theta}  \gamma _0^{a-1} (K_x) \Big\rangle_{L^2(\partial \Omega )}.
\end{align*}
 Then, proceeding as in the proof of 
Theorem \ref{Lions-theo}, 
 by identification with \eqref{main-id-s} and by using a density argument, 
 we conclude that 
$$ M^\theta \,\gamma_0^{a-1}(K_x) = \Gamma (a)\Gamma
(a+1)   M^{-\theta} \, \,  \gamma_0^{a}(G_a(x,\cdot)) \quad \text{a.e.} \, \text{on}\, \partial\Omega. $$
Then, for any  $x$ and $y$ in $\Omega$, 
 \begin{align*}
  K_{a,s} (x,y)   &=\Big\langle \gamma_0^{a-1}(K_x),\,\gamma_0^{a-1}(K_y) \Big\rangle _{2\theta}
  \\ &=  \Big\langle M^\theta \, \gamma_0^{a-1}(K_x),\,M^\theta \, \gamma_0^{a-1}(K_y) \Big\rangle _{L^{2}(\partial \Omega )} 
 \\ &=\Gamma^2 (a)\Gamma^2
(a+1)  \Big\langle M^{-\theta} \,  \gamma_0^{a}(G_a(x,\cdot)) ,\, M^{-\theta} \,  \gamma_0^{a}(G_a(y,\cdot)) \Big\rangle_{L^2 (\partial \Omega) }.
 \end{align*}
 This provides the formula \eqref{l-formu-s} and ends the proof of 
 Theorem \ref{s-Lions-theo}. 
\end{proof}

\section{RKHS associated with the steady Stokes problem}

As a complementary remark regarding the similarity, or lack thereof, of Lions' type formula for the two-point kernel of  RKHS with the Hadamard variation formula, let us highlight that there are some second order systems for which the two formulas do not appear quite the same. Indeed, let us restrict ourselves here to the case where $N=3$, so that  $\Omega$ is a $C^\infty$ bounded connected open set in $\R^3$.
For any $s$ in $\R$,  consider the space $\mathfrak H_s$ of the traces on $\partial \Omega$ of solutions to the steady Stokes problem:
\begin{equation} \label{eq_stoke}
\left\{
\begin{array}{rcl}
- \Delta u + \nabla p &=& 0 \, \\
\operatorname{div} u &= & 0 \, 
\end{array}
\right.
\quad \text{ in $\Omega$} ,
\end{equation}
endowed with the scalar product of the Sobolev space $H^s(\partial \Omega ; \R^3)$.

We  recall that  the system  \eqref{eq_stoke} aims to describe steady, incompressible fluids with a zero-Reynolds number. %
In particular, the second equation encodes the incompressibility of the fluid velocity field $u$, which is vector-valued, and this incompressibility constraint  translates into the presence  of the gradient of the fluid pressure in the first equation, the fluid pressure $p$ being itself scalar-valued.  We refer to \cite{Galdi}  for more.

We also recall the following classical Poisson formula: for any solution $(u,p)$ of the steady Stokes problem \eqref{eq_stoke} we have 
\begin{equation} \label{eq_Green-St}
u(x)
= \int_{\partial \Omega} ( \Sigma (\mathfrak G(x,\cdot),\mathfrak P(x,\cdot))  n) \cdot u \,d\sigma.
\end{equation}
where
\begin{equation} \label{eq_Newt2}
\Sigma(u,p) = 2 D(u) - p \mathbb{I}_3 \quad   \text{ where } 2 D(u):= \nabla u + (\nabla u)^T , 
\end{equation}
and $\mathbb{I}_3$ is the $3 \times 3$ identity matrix 
and $(\mathfrak G(x,\cdot),\mathfrak P(x,\cdot))$ is the Green function associated with 
 the Stokes system  
 that is, the unique tensor such that for any $b \in \R^3$, $(u,p) := (\mathfrak G(x,\cdot)b,\mathfrak P(x,\cdot)b)$ is  the unique solution to the problem: 
\begin{equation*} 
\left\{
\begin{array}{rcl}
- \Delta u + \nabla p &= b \delta_x &  \, \\
\operatorname{div} u &=  0  &\, 
\end{array}
\right. \quad \text{ in $\Omega$} , \end{equation*}
with \begin{equation*} 
 u =0\quad\text{on}\quad \partial\Omega.
\end{equation*}
 A reasoning similar to the ones above leads to the conclusion that the space $\mathfrak H_s$ is a RKHS with the following
 Lions type formula for its $2$-points kernel $\mathfrak K_s$: 
$$ \mathfrak K_s (x,y) =   \int_{\partial\Omega} \Big((M^{-\frac{s}{2}} \, \gamma_D \big(\Sigma(\mathfrak G(x,\cdot),\mathfrak P(x,\cdot))n\big)\Big)(z) \cdot \Big(M^{-\frac{s}{2}} \, \gamma_D \big(  \Sigma(\mathfrak G(y,\cdot),\mathfrak P(y,\cdot))n   \big) \Big)(z)\,d\sigma(z) .
$$
 On the other hand, it is known since some works by Simon, see for instance \cite{Simon} and the recent works by \cite{KU,Oz}, 
 that 
 the Hadamard variation formula in the case of the Stokes equations reads: for any $b \in \R^3$,
 $$D(\mathfrak G b) (\alpha) (x,y) =  
 \int_{\partial\Omega} \gamma_N \big(\mathfrak G(x,\cdot) b\big)(z) \, \gamma_N \big(\mathfrak G(y,\cdot)b\big)(z) \,  \alpha(z)d\sigma(z) , $$
where $\alpha\in C^\infty(\partial\Omega)$.


\end{document}